\numberwithin{equation}{section}
\newtheorem{theorem}{Theorem}[section]
\newtheorem{proposition}[theorem]{Proposition}
\newtheorem{lemma}[theorem]{Lemma}
\theoremstyle{definition}
\theoremstyle{remark}
\newtheorem{remark}[theorem]{Remark}
\newcommand{\R}{\mathbf{R}}
\begin{document}

\title{Migrating elastic flows II}

\author[T.~Miura]{Tatsuya Miura}
\address[T.~Miura]{Department of Mathematics, Graduate School of Science, Kyoto University, Kitashirakawa Oiwake-cho, Sakyo-ku, Kyoto 606-8502, Japan}
\email{tatsuya.miura@math.kyoto-u.ac.jp}

\date{\today}
\keywords{Elastic flow, Huisken's problem, long-time behavior, natural boundary condition, elastica}
\subjclass[2020]{53E40 (primary), 53A04, 35B40 (secondary)}

\begin{abstract}
  We solve a variant of Huisken's problem for open curves: we construct migrating elastic flows under natural boundary conditions, extending previous work from the nonlocal flow to the purely local flow.
\end{abstract}

\maketitle


\section{Introduction}

This paper is a continuation of the author and Kemmochi's previous work \cite{KM24}, focusing on Huisken's problem for elastic flows.

The elastic flow is a widely studied example of a fourth-order parabolic geometric flow, arising as the $L^2$-gradient flow of the bending energy penalized by the length (see e.g.\ \cite{Polden1996,DziukKuwertSchaetzle2002,MantegazzaPozzetta2021,NovagaOkabe14,NovagaOkabe17,Spener2017,Lin2012,DallAcquaPozziSpener2016,Diana2024} and also the survey \cite{MPP21}).
Due to the higher-order nature of the equation, elastic flows exhibit various positivity-breaking phenomena, attributed to the lack of maximum principles.
While the question of \emph{whether} positivity breaks has been well explored for various higher-order flows \cite{GigaIto1998,GigaIto1999,MayerSimonett2000,ElliottMaier-Paape2001,Blatt2010}, the extent to \emph{how much} positivity can be broken remains largely unexamined (see \cite{miura2021optimal} for a recent development).

In this context, Huisken's problem asks \emph{whether there exists an elastic flow of closed planar curves that, while initially contained in the upper half-plane, migrates into the lower half-plane at a positive time} (\cite[Remark 1.3]{MantegazzaPozzetta2021}, \cite[p.118]{MPP21}).
This problem is profound, as it goes beyond the basic question of whether the flow simply protrudes from the upper half-plane.

Although Huisken's original problem remains unsolved, it was shown in \cite[Theorem 1.1]{KM24} that migration occurs when considering open curves with natural boundary conditions, and in addition replacing the elastic flow by its length-preserving variant. 
This result marks the first progress on the (im)possibility of migration.

However, in this scenario, the flow is not just of higher-order but also includes the nonlocal constraint of length preservation. Since nonlocality is another well-known reason for the loss of maximum principles (cf.\ \cite{Gage86, Mayer01}), the above result does not directly imply that migration is solely due to the higher-order nature.

Without the nonlocal constraint, numerical evidence in \cite[Example 3.4]{KM24} suggests that migration can still occur for open curves. Nonetheless, the theoretical analysis in this purely local case is more complex and remains an open problem.

In this paper we solve this open problem.
Let $I:=(0,1)$ and $\bar{I}:=[0,1]$.
A smooth one-parameter family of immersed curves $\gamma:\bar{I}\times[0,\infty)\to\mathbf{R}^2$ is called a \emph{(length-penalized) elastic flow} if there is a given constant $\lambda>0$ such that
\begin{align}\label{eq:flow}
  \partial_t\gamma = -2\nabla_s^2\kappa-|\kappa|^2\kappa+\lambda\kappa,
\end{align}
where $s$ denotes the arclength parameter, $\kappa=\kappa[\gamma]:=\partial_s^2\gamma$ the curvature vector ($\partial_s:=|\partial_x\gamma|^{-1}\partial_x$), and $\nabla_s\psi:=\partial_s\psi-\langle \partial_s\psi,\partial_s\gamma \rangle \partial_s\gamma$ the normal derivative along $\gamma$.
The bracket $\langle \cdot,\cdot \rangle$ denotes the Euclidean inner product.
This flow is obtained as the $L^2$-gradient flow of the modified bending energy $E_\lambda$ defined by
\[
E_\lambda[\gamma] = B[\gamma]+\lambda L[\gamma] := \int_\gamma|\kappa|^2ds + \lambda \int_\gamma ds.
\]
When specifying $\lambda$, we also call it a \emph{$\lambda$-elastic flow}.
In addition, we impose the \emph{natural boundary condition} in which the endpoints are fixed and the curvature vanishes there: for given $p_0,p_1\in\mathbf{R}^2$,
\begin{equation}\label{eq:BC}
  \gamma(0,t)=p_0,\ \gamma(1,t)=p_1,\ \kappa(0,t)=\kappa(1,t)=0 \quad \mbox{for all $t\geq0$}.
\end{equation}
Finally, define the upper and lower closed half-planes $H_\pm$, their boundary line $\Lambda_0$, and the (interior) open half-planes $H_\pm^\circ$ by 
$$H_\pm:=\{x\in\mathbf{R}^2 \mid \pm\langle x,e_2 \rangle \geq 0 \}, \quad \Lambda_0:= \partial H_\pm, \quad H_{\pm}^\circ:=H_\pm\setminus\Lambda_0.$$

Our main result then reads as follows.

\begin{theorem}\label{thm:main_migrating}
  There exists $c>0$ with the following property:
  Let $\lambda>0$ and $p_0,p_1\in \Lambda_0\subset \mathbf{R}^2$ such that $0<\lambda|p_0-p_1|^2< c$.
  Then there exists a smooth solution $\gamma:\bar{I}\times[0,\infty)\to\mathbf{R}^2$ to the $\lambda$-elastic flow \eqref{eq:flow} under the natural boundary condition \eqref{eq:BC} such that $\gamma(I\times[0,t_0])\subset H_+^\circ$ and $\gamma(I\times[t_1,\infty))\subset H_-^\circ$ hold for some $0<t_0<t_1<\infty$.
\end{theorem}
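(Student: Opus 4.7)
The plan is to combine the long-time theory of the elastic flow under natural boundary conditions with a classification of the limiting elasticas and a continuity argument on the initial data.

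First I would invoke global existence and smooth sub-convergence for \eqref{eq:flow}--\eqref{eq:BC} (see e.g.\ \cite{Lin2012, DallAcquaPozziSpener2016}): for any smooth admissible initial curve $\gamma_0$, there exists a unique smooth solution for all $t\geq0$, the energy $E_\lambda[\gamma(\cdot,t)]$ is nonincreasing, and along some sequence $t_n\to\infty$ the curves $\gamma(\cdot,t_n)$ sub-converge smoothly to a $\lambda$-elastica $\gamma_\infty$ satisfying \eqref{eq:BC}; a \L{}ojasiewicz--Simon argument can upgrade this to full $C^k$-convergence when $\gamma_\infty$ is an isolated critical point.

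Next I would classify $\lambda$-elasticas with endpoints $p_0,p_1\in\Lambda_0$ and the natural boundary condition. Integrating $2k_{ss}+k^3-\lambda k=0$ with $k(0)=k(L)=0$ in terms of Jacobi elliptic functions, the critical points reduce to the trivial segment $[p_0,p_1]\subset\Lambda_0$ and a discrete family of wavelike elasticas indexed by the number of interior inflection points. Under the smallness $\lambda|p_0-p_1|^2<c$, the simple one-arc wavelike elasticas form a pair $\gamma_*^\pm$ of mirror-image arcs across $\Lambda_0$; I would quantify the energies of these arcs and the strict energy gap separating them from more complex multi-bump elasticas.

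The heart of the proof is to exhibit an admissible $\gamma_0\subset H_+^\circ$ whose flow converges to $\gamma_*^-\subset H_-^\circ$. I would attempt this via a continuity argument: take a continuous family $\{\gamma_0^s\}_{s\in[0,1]}\subset H_+^\circ$ of admissible initial curves with controlled energy such that at $s=0$ the curve is close (in $C^k$) to $\gamma_*^+$, so the corresponding flow converges to $\gamma_*^+$ by local stability, while at $s=1$ the curve is engineered so that convergence to $\gamma_*^+$ is impossible. Combined with the energy bound ruling out higher-complexity and higher-energy elasticas, and the discreteness of the remaining possibilities, some $s^*\in(0,1)$ must yield a flow converging to $\gamma_*^-$. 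For $\gamma_0:=\gamma_0^{s^*}$, continuity in $t$ gives $\gamma(I\times[0,t_0])\subset H_+^\circ$ for small $t_0>0$, while full $C^k$-convergence to $\gamma_*^-$ together with the openness of $H_-^\circ$ yields $\gamma(I\times[t_1,\infty))\subset H_-^\circ$ for large $t_1$.

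The main obstacle is constructing the $s=1$ endpoint: an explicit $\gamma_0^1\subset H_+^\circ$ whose flow cannot converge to $\gamma_*^+$. This requires input beyond energy monotonicity, and is the genuinely new difficulty in the purely local setting relative to \cite{KM24}, where length conservation provided decisive rigidity. One plausible route is a perturbative analysis of the flow near the straight segment: in the regime $\lambda|p_0-p_1|^2<c$ the segment should be a linearly unstable critical point of $E_\lambda$, and choosing $\gamma_0^1\subset H_+^\circ$ whose projection onto the downward unstable eigenspace dominates should produce a flow that first approaches the segment and is then ejected into $H_-^\circ$. The smallness of $\lambda|p_0-p_1|^2$ is essential here: it controls both the structure of the elastica classification and the quantitative instability of the segment, and ensures that the required asymmetric initial curve indeed fits within $H_+^\circ$.
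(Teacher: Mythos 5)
Your overall skeleton (global existence, energy monotonicity, convergence to a critical point, classification of low-energy critical points, then forcing the limit to be the lower arc) matches the paper's, but the mechanism you propose for the decisive step is based on a false premise and the continuity argument does not close. The critical error is your claim that for small $\lambda|p_0-p_1|^2$ the straight segment is a linearly unstable critical point of $E_\lambda$. It is not: since $E_\lambda\geq \lambda L\geq \lambda|p_0-p_1|$ with equality exactly for the segment, the segment is the \emph{strict global minimizer} of $E_\lambda$ in the admissible class for $\ell=|p_0-p_1|>0$, hence a stable attractor (this is precisely the ``trivial attractor'' difficulty the paper highlights as the obstruction to transplanting the length-preserving argument of \cite{KM24}). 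So there is no downward unstable eigenspace of the segment to exploit, and your proposed $s=1$ endpoint cannot be built this way. This also undermines the intermediate-value step: even granting that the sets of parameters $s$ whose flows converge to $\gamma_{arc}^{+,\ell}$ and to $\gamma_{arc}^{-,\ell}$ are both open, their complement can be (and generically is) absorbed by convergence to the segment, which lies on $\Lambda_0$ and yields no migration conclusion. Your classification also omits the loops, which turn out to be the essential objects.

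The paper's actual resolution is an energy-barrier argument in the total-curvature variable rather than a shooting argument. One perturbs the \emph{unstable upper loop} $\gamma_{loop}^{+,\ell}$ (not the segment) to get an initial datum $\gamma_0\subset H_+^\circ$ with $E[\gamma_0]<E[\gamma_{loop}^{+,\ell}]$ and total curvature close to $r_*=TC[\gamma_*]>0$, where $\gamma_*$ is the half-fold figure-eight elastica. Energy monotonicity restricts the limit to $\{\gamma_{seg}^\ell,\gamma_{arc}^{+,\ell},\gamma_{arc}^{-,\ell}\}$. The key new ingredient is that the constrained minimum $m(\ell,r)=\min\{E[\gamma]:TC[\gamma]=r\}$ satisfies $m(\ell,\tfrac12 r_*)\geq E[\gamma_*]+\tfrac12 m_*>E[\gamma_0]$ for small $\ell$, so the flow's total curvature can never cross the level $\tfrac12 r_*$; since the segment has $TC=0$ and the upper arc has $TC<0$, both are excluded, leaving only the lower arc. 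If you want to salvage your approach, you would need to replace the segment-instability mechanism by some comparably quantitative obstruction, and the total-curvature barrier is exactly such an obstruction.
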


It is unclear whether the bound $c > 0$ is merely a technical condition or essential in the sense that migration cannot occur for large values of $\lambda|p_0 - p_1|^2$.
Our previous numerical simulation indicates that when $|p_0 - p_1| = 1$ and $\lambda \gg 1$, migration becomes difficult \cite[Example 3.5]{KM24}. This aligns with the fact that, in the limit as $\lambda \to \infty$, the elastic flow formally approaches the curve shortening flow (the $L^2$-gradient flow of length), which is of second order and remains in the upper half-plane due to the maximum principle.

Our proof builds on the energy-barrier method developed in the length-preserving case \cite{KM24}, in combination with a recent classification result by M\"uller and Yoshizawa for length-penalized pinned elasticae \cite{MullerYoshizawa2024}.
The primary new difficulty arises from the variational structure of stationary solutions.

In the length-preserving case, by analyzing the variational structure, we could construct a flow converging to a global minimizer of $B$ contained in $H_-$ (called the lower arc) while starting from a perturbation of a stationary solution with the second smallest energy in $H_+$ (called the upper loop).
A critical aspect of the proof involves utilizing the energy-barrier associated with zero total curvature, positioned between the two global minimizers (upper arc and lower arc).

In the length-penalized case, the straight segment is always a global minimizer of $E_\lambda$ (cf.\ Figure \ref{fig:pinned_elasticae}), acting as a trivial attractor (cf.\ \cite[Figure 5]{KM24}).
However, the asymptotic analysis of flows converging to this segment presents challenges when theoretically addressing migration, as the limit curve lies on the boundary line of the half-planes.
This prevents us from directly applying the method of flowing into a global minimizer.

To overcome this, we construct a new, more refined energy-barrier within the regime of positive total curvature. 
This again ensures the existence of a flow converging to a lower arc while starting from a perturbed upper loop.
In particular, the barrier prevents the flow from tending toward the trivial global minimizer (segment) as well as the unfavorable local minimizer (upper arc).


\begin{figure}
    \centering
    \includegraphics[width=0.25\linewidth]{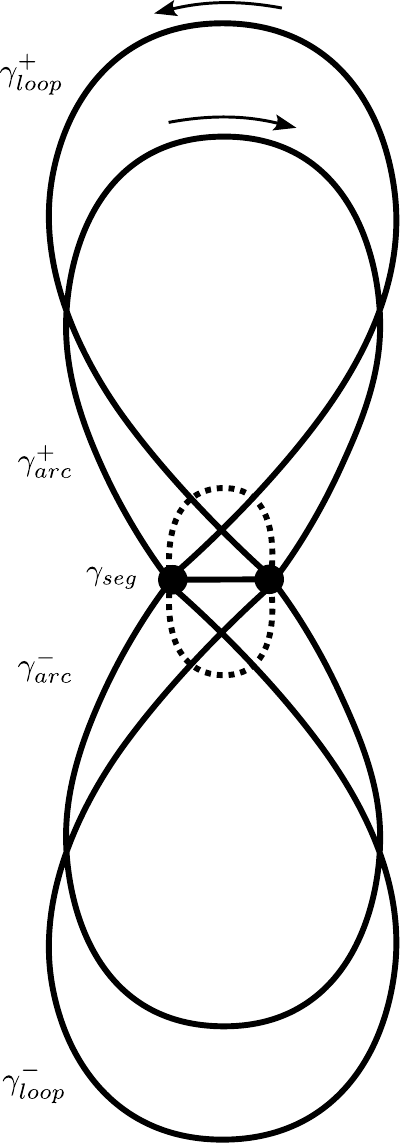}
    \caption{Low energy stationary solutions.}
    \label{fig:pinned_elasticae}
\end{figure}

\subsection*{Acknowledgments}
This work is supported by JSPS KAKENHI Grant Numbers JP21H00990, JP23H00085, and JP24K00532.

\section{Construction of migrating elastic flows}\label{sect:migrating}

In this section we prove Theorem \ref{thm:main_migrating}.
Throughout the proof, up to rescaling, we assume without loss of generality that
\[
\lambda=1,
\]
and thus consider the energy
\[
E := B+ L.
\]

\subsection{Notation}

For an immersed curve $\gamma\in H^2(I;\mathbf{R}^2)$, let $\theta=\theta[\gamma]\in H^1(I)$ denote the tangential angle function defined so that
$\partial_s\gamma = (\cos\theta,\sin\theta),$
which is unique modulo $2\pi$.
Due to the Sobolev embedding $H^2(I;\mathbf{R}^2) \hookrightarrow C^1(\bar{I};\mathbf{R}^2)$, we can assume throughout that $\gamma$ is of class $C^1$.
Let $k=k[\gamma]\in L^2(I)$ denote the signed curvature defined by
$k=\partial_s\theta,$
and let $TC$ denote the total (signed) curvature
\begin{equation}\label{eq:TC_angle}
TC[\gamma]:=\int_\gamma kds = \theta(1)-\theta(0).    
\end{equation}

For $\ell\geq0$ let $A_\ell$ be the set of immersed $H^2$-Sobolev planar curves of fixed endpoints $p_0=(0,0)$ and $p_1=(\ell,0)$,
\begin{align*}
    A_\ell := \{\gamma\in H^2_\mathrm{imm}(I;\mathbf{R}^2) \mid \gamma(0)=(0,0),\ \gamma(1)=(\ell,0) \},
\end{align*}
where
$$H^2_\mathrm{imm}(I;\mathbf{R}^2) := \big\{ \gamma\in H^2(I;\mathbf{R}^2) \mid \min_{x\in\bar{I}}|\gamma'(x)|>0  \big\}.$$

\subsection{Existence and convergence of the elastic flow}

It is known that the elastic flow \eqref{eq:flow} under the natural boundary condition \eqref{eq:BC} always has a unique global-in-time solution from any smooth initial datum \cite[Theorem 2.1]{NovagaOkabe14}, and in addition any such solution converges to a stationary solution as $t\to\infty$ (up to reparametrization) \cite[Theorem 4.2]{NovagaOkabe17}.
We summarize the properties which we will use in the following lemma.

\begin{lemma}[\cite{NovagaOkabe14,NovagaOkabe17}]\label{lem:elastic_flow}
    Let $\ell\geq0$ and $\gamma_0$ be an initial datum such that $\gamma_0\in A_\ell\cap C^\infty(I;\R^2)$ and $k[\gamma_0]=0$ on $\partial I$.
    Then the elastic flow \eqref{eq:flow} with $\lambda=1$ under the natural boundary condition \eqref{eq:BC} has a unique smooth solution $\gamma:\bar{I}\times[0,\infty)\to\R^2$.
    The flow decreases the energy $E$ in the sense that $\frac{d}{dt}E[\gamma(\cdot,t)]\leq0$ for $t\geq0$.
    In addition, as $t\to\infty$ the flow $\gamma(\cdot,t)$ smoothly converges to a limit curve $\gamma_\infty\in A_\ell$ up to reparametrization, where $\gamma_\infty$ is a smooth stationary solution such that
    \begin{align}\label{eq:pinned_elastica}
        \begin{cases}
            -2k_{ss}-k^3+k =0,\\
            k(0)=k(1)=0.
        \end{cases}
    \end{align}
\end{lemma}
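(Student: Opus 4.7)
The plan is to assemble the lemma from three ingredients—the well-posedness result of Novaga-Okabe, the dissipation identity intrinsic to the $L^2$-gradient flow structure, and the long-time asymptotic analysis of the follow-up paper—and then identify the stationary limit by reading off the scalar Euler-Lagrange equation of $E$. Each ingredient is essentially in the literature, so the real work is verifying hypotheses and tracking the normalization $\lambda=1$.

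For existence and regularity, I would apply \cite[Theorem 2.1]{NovagaOkabe14} directly. The initial datum $\gamma_0 \in A_\ell \cap C^\infty(I;\R^2)$ is smooth and immersed with fixed endpoints $p_0=(0,0)$ and $p_1=(\ell,0)$, and the compatibility condition $k[\gamma_0]=0$ on $\partial I$ is precisely what the natural boundary problem \eqref{eq:BC} requires. Their theorem then yields a unique smooth global-in-time solution $\gamma:\bar I\times[0,\infty)\to\R^2$ preserving \eqref{eq:BC} for all $t\geq 0$. The dissipation $\frac{d}{dt}E[\gamma(\cdot,t)] = -\int_I |\partial_t\gamma|^2 |\partial_x\gamma|\,dx \leq 0$ is then immediate from the first variation of $E_\lambda=B+\lambda L$ evaluated against $\partial_t\gamma$: the boundary contributions in the integration by parts vanish because $\partial_t\gamma=0$ at $\partial I$ (endpoints fixed) and $\kappa=0$ at $\partial I$.

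For the long-time behavior, I would invoke \cite[Theorem 4.2]{NovagaOkabe17}, whose setting matches exactly the present natural-boundary problem. Their proof uses the dissipation together with parabolic bootstrap to obtain uniform $H^k$ bounds in time and extract a smooth subsequential limit in $A_\ell$, then upgrades this to full convergence via a \L ojasiewicz-Simon inequality adapted to pinned elasticae. The limit curve $\gamma_\infty$ satisfies the stationary equation $-2\nabla_s^2\kappa - |\kappa|^2\kappa + \kappa = 0$ with fixed endpoints and $\kappa=0$ on $\partial I$. To recover \eqref{eq:pinned_elastica}, I would use that in the plane $\kappa = k\nu$ with $\nu$ the unit normal, and $\nabla_s\nu = 0$, so $\nabla_s^2\kappa = k_{ss}\nu$; projecting the vectorial equation onto $\nu$ yields the scalar ODE $-2k_{ss}-k^3+k=0$, with the pointwise boundary conditions $k(0)=k(1)=0$ inherited from \eqref{eq:BC} by smooth convergence.

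Since every step is already in the references, there is no genuine mathematical obstacle; the only care required is bookkeeping. Specifically, I would verify that the statements of \cite{NovagaOkabe14,NovagaOkabe17}, formulated for general $\lambda>0$ and for the natural boundary condition with fixed endpoints, specialize correctly to $\lambda=1$, and that the convergence there is indeed smooth up to reparametrization in the sense stated in our Lemma (rather than, say, only $C^m$ for each $m$ or only along subsequences). This amounts to quoting the appropriate corollaries and making the normalization explicit.
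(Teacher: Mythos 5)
Your proposal is correct and takes essentially the same route as the paper: the lemma is a summary of known results, and the paper likewise justifies it by citing \cite[Theorem 2.1]{NovagaOkabe14} for global well-posedness and \cite[Theorem 4.2]{NovagaOkabe17} for smooth convergence (up to reparametrization) to a stationary solution. Your added bookkeeping---the energy dissipation identity from the gradient-flow structure and the projection of the vectorial stationary equation onto the normal to obtain the scalar system \eqref{eq:pinned_elastica}---is exactly the routine specialization the paper leaves implicit.
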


\subsection{Stationary solutions}

We also summarize the known properties of solutions to \eqref{eq:pinned_elastica}, which will play an important role in our later analysis.

We first recall the characterization of the unique minimizer in the case that the endpoints coincide, $\ell=0$.

\begin{lemma}[{\cite[Proposition 5.5]{Miura_LiYau}}]\label{lem:Li-Yau}
    There exists a minimizer of $E$ in $A_0$.
    In addition, any minimizer is congruent to a unique half-fold figure-eight elastica $\gamma_*\in A_0$.
\end{lemma}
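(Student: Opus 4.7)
The plan is to combine the direct method of the calculus of variations for existence with a classification of critical points via elastica theory for the characterization. Schematically: (i) produce a minimizer by compactness, (ii) derive the Euler--Lagrange pinned elastica equation \eqref{eq:pinned_elastica}, and (iii) classify the pinned elasticae in $A_0$ and identify the one of smallest energy.

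For existence I would take a minimizing sequence $\{\gamma_n\} \subset A_0$ with $E[\gamma_n] \to \inf_{A_0} E$ and reparametrize each $\gamma_n$ to have constant speed $|\partial_x\gamma_n|\equiv L_n$ on $\bar{I}$, which gives a uniform $H^2$-bound modulo the possible degeneration $L_n\to 0$. To rule that out I would prove the universal estimate $B[\gamma]\cdot L[\gamma] \geq 6$ for every $\gamma \in A_0$: the closure identity $\int_0^{L}(\cos\theta,\sin\theta)\,ds = 0$, combined with the elementary bound $\cos x \geq 1-x^2/2$ and Cauchy--Schwarz applied to $\theta(s_1)-\theta(s_2) = \int_{s_2}^{s_1}k\,ds$, yields $L^2\leq \tfrac{1}{6}BL^3$. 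Testing against a fixed competitor (say, a unit circle through the origin, for which $E=4\pi$) then bounds $L_n$ both from above and from below, so a subsequence converges weakly in $H^2$ (and strongly in $C^1$) to an immersed $\gamma_\infty\in A_0$, and the lower semicontinuity of $B$ and $L$ finishes existence.

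Any minimizer is a critical point: compactly supported variations inside $(0,1)$ produce the interior equation $-2k_{ss}-k^3+k=0$, while boundary variations that preserve $\gamma(0)=\gamma(1)=(0,0)$ but let the tangent directions rotate yield the natural condition $k(0)=k(1)=0$, so \eqref{eq:pinned_elastica} holds. The smooth solutions of this ODE, the pinned elasticae, are explicitly parametrized by Jacobi elliptic functions; the zero conditions on $k$ at the endpoints reduce the continuous family to a one-parameter problem in the elliptic modulus $m$ together with an integer $N$ counting the half-arches of $k$ between the endpoints. Imposing the further closure constraint $\gamma(0)=\gamma(1)$ carves out a discrete set of admissible profiles, and I would adopt the explicit classification of such pinned elasticae in $A_0$ developed by M\"uller--Yoshizawa \cite{MullerYoshizawa2024}.

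The final and hardest step is the energy comparison within this discrete set. The energy of each candidate can be written in closed form in terms of the complete elliptic integrals $E(m)$ and $K(m)$ together with $N$, and one needs to show by a monotonicity argument in $N$ and in $m$ that the simplest configuration closing at a single pinned point, namely the half-fold figure-eight $\gamma_*$, strictly beats every other admissible candidate. Uniqueness up to congruence then follows, since the only Euclidean transformations preserving $A_0$ are rotations about the origin and reflections through lines containing it, and these act transitively on the continuous orbit of $\gamma_*$. I expect this monotonicity analysis of elliptic integrals to be the main technical obstacle; a pragmatic route, if the direct comparison proves awkward, is to quote the explicit energy orderings already established in \cite{MullerYoshizawa2024}.
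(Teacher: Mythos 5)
First, a framing remark: the paper does not prove this lemma at all --- it is imported wholesale from \cite[Proposition 5.5]{Miura_LiYau} --- so the comparison is with that external proof. Your existence argument is sound. The double-averaged closure identity does give $B[\gamma]L[\gamma]\geq 6$ for every $\gamma\in A_0$ (for the record, the paper's own Proposition \ref{prop:existence_min_TC} obtains the sharper bound $BL\geq\pi^2$ by doubling the curve point-symmetrically and applying Fenchel's theorem, but either estimate rules out length degeneration), and the constant-speed reparametrization, the competitor bound, and weak $H^2$ compactness then produce a minimizer in the standard way. The derivation of the Euler--Lagrange system \eqref{eq:pinned_elastica} with the natural condition $k(0)=k(1)=0$ is also fine.

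The gap is in your third step, which is where all of the content of the uniqueness statement lives. You reduce to classifying the solutions of \eqref{eq:pinned_elastica} in $A_0$ and comparing their energies, but you do not carry out the comparison, and your proposed fallback --- quoting the classification and energy orderings of M\"uller--Yoshizawa \cite{MullerYoshizawa2024} --- is not available here: that theory concerns two \emph{distinct} pinned endpoints, and the present paper invokes it only for $\ell\in(0,\ell_\dagger)$, precisely outsourcing the degenerate case $\ell=0$ to \cite{Miura_LiYau}. For coincident endpoints the critical set is genuinely different (the segment and the arcs disappear, while multi-fold figure-eight-type configurations appear), so the discrete family you would need to order is not the one whose energy comparison is already in the literature you cite. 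The route actually taken in \cite{Miura_LiYau} sidesteps the classification: since $A_0$ is dilation-invariant and $E=B+L\geq 2\sqrt{BL}$ with equality at the optimal scale, minimizing $E$ over $A_0$ is equivalent to minimizing the scale-invariant quantity $LB$, and the latter is controlled by a Li--Yau type inequality at a point of multiplicity two whose optimal constant and rigidity case are exactly the half-fold figure-eight elastica. To make your argument self-contained you would have to either reproduce that inequality or genuinely perform the elliptic-integral monotonicity analysis you defer; as written, the characterization half of the lemma is asserted rather than proved.
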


\begin{remark}
    The notion of \emph{half-fold figure-eight elastica} is defined in \cite[Definition 2.3, Remark 2.4]{Miura_LiYau}.
    Without loss of generality, hereafter we assume that the image $\gamma_*(I)$ is horizontally symmetric and contained in $H_+$, and also $\gamma_*$ has a constant-speed parametrization such that
    \[
    r_*:=TC[\gamma_*]>0.
    \]
    Under these conditions, the curve $\gamma_*$ is uniquely determined.
\end{remark}

Now, turning to the case $\ell>0$, we recall some consequences from M\"uller--Yoshizawa's classification theory for pinned penalized elasticae.
Note that in this case the minimizer of $E$ is always a trivial segment, as opposed to Lemma \ref{lem:Li-Yau}.

We say that a curve $\gamma\in A_\ell$ is \emph{strongly contained in $H_+$} (resp.\ $H_-$) if there exists $\varepsilon>0$ such that $\xi(I)\subset H_+^\circ$ (resp.\ $\xi(I)\subset H_-^\circ$) holds for any $\xi\in A_\ell$ such that $\|\xi-\gamma\|_{C^1(\bar{I};\R^2)}<\varepsilon$.

\begin{lemma}[\cite{MullerYoshizawa2024}]\label{lem:stationary}
    There exists a maximal $\ell_\dagger\in(0,\infty)$ with the following properties:
    Let $\ell\in(0,\ell_\dagger)$, and $\Gamma_\ell\subset A_\ell$ be the subset of all constant-speed curves that are solutions to \eqref{eq:pinned_elastica}.
    Then there are a unique straight segment $\gamma_{seg}^\ell\in\Gamma_\ell$, upper and lower arcs $\gamma_{arc}^{+,\ell},\gamma_{arc}^{-,\ell}\in\Gamma_\ell$, and upper and lower loops $\gamma_{loop}^{+,\ell},\gamma_{loop}^{-,\ell}\in\Gamma_\ell$ such that
    \begin{enumerate}
        \item $\gamma_{arc}^{+,\ell}$ and $\gamma_{loop}^{+,\ell}$ are strongly contained in $H_+$, while $\gamma_{arc}^{-,\ell}$ and $\gamma_{loop}^{-,\ell}$ are strongly contained in $H_-$,
        \item $TC[\gamma_{seg}^\ell]=0$, while $\pm TC[\gamma_{arc}^{\pm,\ell}]<0$ and $\pm TC[\gamma_{loop}^{\pm,\ell}]>0$,
        \item $\ell =E[\gamma_{seg}^\ell] < E[\gamma_{arc}^{+,\ell}]=E[\gamma_{arc}^{-,\ell}] < E[\gamma_{loop}^{+,\ell}]=E[\gamma_{loop}^{-,\ell}]$, and all the other (countably many) elements in $\Gamma_\ell$ have strictly larger energy $E$ than $\gamma_{loop}^{\pm,\ell}$.
    \end{enumerate}
    In addition, as $\ell\to+0$, the upper loop $\gamma_{loop}^{+,\ell}$ smoothly converges to the half-fold figure-eight elastica $\gamma_*$.
    In particular,
    \begin{equation}\label{eq:loop_convergence}
        \lim_{\ell\to+0}E[\gamma_{loop}^{+,\ell}] = E[\gamma_*], \quad \lim_{\ell\to+0}TC[\gamma_{loop}^{+,\ell}] = TC[\gamma_*].
    \end{equation}
\end{lemma}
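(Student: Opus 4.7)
The plan is to read each assertion off the explicit classification of constant-speed pinned elasticae in \cite{MullerYoshizawa2024}, supplementing with a short transversality argument for (i) and a continuity plus minimality argument for the $\ell\to 0^+$ limit.

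First, I would invoke the classification to parametrize $\Gamma_\ell$. Any nontrivial solution of \eqref{eq:pinned_elastica} in $A_\ell$ is a rescaling of a classical elastica; the boundary condition $k(0)=k(1)=0$ forces the endpoints to be inflection points, and the constraints ``horizontal endpoints at distance $\ell$'' together with the constant-speed parametrization reduce the moduli space to a countable family indexed by an oscillation count and a modulus, split into the wavelike and looped branches. M\"uller--Yoshizawa show that below an explicit threshold $\ell_\dagger$ the five lowest-energy elements are exactly the trivial segment, the wavelike pair (arcs), and the looped pair (loops), and that all other critical points lie strictly higher in $E$. This isolates the five named curves. The energy ordering $\ell = E[\gamma_{seg}^\ell] < E[\gamma_{arc}^{\pm,\ell}] < E[\gamma_{loop}^{\pm,\ell}]$ is part of their result, and the equalities within the $\pm$ pairs follow from the reflection symmetry $e_2\mapsto -e_2$.

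Second, item (ii) follows by inspection of the elliptic-function representations: for arcs, $k$ has constant sign on $I$ and $|TC|<\pi$ with sign opposite to the half-plane label, while for loops $k$ changes sign once and $|TC|>\pi$ with sign matching the label. For the strong containment in (i), the explicit parametrization shows that the image of $\gamma_{arc}^{+,\ell}$ (resp.\ $\gamma_{loop}^{+,\ell}$) lies in $H_+^\circ$ on $I$, meets $\Lambda_0$ only at $p_0,p_1$, and has tangent with nonzero vertical component at each endpoint (the boundary condition $k=0$ does not constrain $\theta$ itself, and the explicit formulas yield a transverse entry into $H_+^\circ$). Since ``image in $H_+^\circ$ on $I$ with transverse tangent at $\partial I$'' is an open condition in $C^1(\bar{I};\mathbf{R}^2)$, strong containment follows; the lower statements come from reflection.

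Third, for the convergence $\gamma_{loop}^{+,\ell}\to \gamma_*$ as $\ell\to 0^+$, the upper loop depends continuously on $\ell$ through its elliptic parameters and has uniformly bounded energy, so by $H^2$-compactness and elliptic regularity applied to \eqref{eq:pinned_elastica} it converges smoothly along a subsequence to a stationary limit $\gamma_0\in A_0$ with positive total curvature. A comparison argument using a small perturbation of $\gamma_*$ that closes the gap between $p_0$ and $p_1$ gives $\limsup_{\ell\to 0^+} E[\gamma_{loop}^{+,\ell}]\leq E[\gamma_*]$, while Lemma \ref{lem:Li-Yau} forces $E[\gamma_0]\geq E[\gamma_*]$. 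Hence $\gamma_0$ is a minimizer of $E$ on $A_0$, and, by the uniqueness-up-to-congruence of Lemma \ref{lem:Li-Yau} together with the chosen normalization (positive $TC$, horizontal symmetry, image in $H_+$), one has $\gamma_0 = \gamma_*$. Uniqueness of the limit upgrades the subsequential convergence to full smooth convergence, from which \eqref{eq:loop_convergence} is immediate. The main obstacle is the bookkeeping needed to extract from \cite{MullerYoshizawa2024} the precise threshold $\ell_\dagger$ and the strict energy separation of the five named solutions from the rest of $\Gamma_\ell$; once that is in hand, the transversality and limiting arguments above are routine.
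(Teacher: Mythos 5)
Your first two parts (parametrizing $\Gamma_\ell$ via the classification, reading off the energy ordering and total-curvature signs from the explicit elliptic-function representations, and the $C^1$-openness argument for strong containment) are in line with what the paper does: the paper gives no self-contained proof but derives every assertion from M\"uller--Yoshizawa's classification theorem, their explicit definitions of the critical points (your arcs and loops are their $(1,\ell,1)$-longer arcs and $(1,\ell,1)$-loops), and their energy comparison results, with $\ell_\dagger=\sqrt{\lambda_\dagger}$, $\lambda_\dagger\simeq 0.32241$, taken from their Lemma 4.10. That portion of your proposal is fine.

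The genuine gap is in your argument for the limit $\gamma_{loop}^{+,\ell}\to\gamma_*$ as $\ell\to+0$. Your key step, $\limsup_{\ell\to 0^+}E[\gamma_{loop}^{+,\ell}]\leq E[\gamma_*]$ ``by a comparison argument using a small perturbation of $\gamma_*$ that closes the gap between $p_0$ and $p_1$,'' is invalid: exhibiting a competitor in $A_\ell$ with energy close to $E[\gamma_*]$ only bounds $\min_{A_\ell}E$, and that minimum is attained by the segment with energy $\ell\to 0$, not by the loop. The upper loop is \emph{not} a minimizer of $E$ in $A_\ell$ (by item (iii) it lies strictly above both the segment and the arcs; indeed it is an unstable critical point, which is exactly why the paper can perturb it downward in energy in Step 2 of the main proof), nor is it characterized in this paper as a constrained minimizer over any class your competitor belongs to. Without this $\limsup$ bound, your compactness argument cannot identify the subsequential limit: a priori it could be any stationary solution in $A_0$ with zero curvature at the endpoints, e.g.\ a higher-fold figure-eight elastica of strictly larger energy, and Lemma \ref{lem:Li-Yau} alone (which only gives $E[\gamma_0]\geq E[\gamma_*]$) cannot exclude this. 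Note also that your uniform energy bound is itself borrowed from ``continuous dependence on elliptic parameters,'' which is precisely the explicit-formula information that, once invoked, yields the convergence directly and makes the compactness detour unnecessary. The paper's route avoids the issue entirely: the $(1,\ell,1)$-loop is given by explicit elliptic-function formulas whose parameters converge, as $\ell\to+0$, to those of the half-fold figure-eight elastica, and the smooth convergence (hence \eqref{eq:loop_convergence}) is read off from that, rather than from a variational selection principle.
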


All the important curves are plotted in Figure \ref{fig:pinned_elasticae} (where $\ell$ is dropped).

\begin{remark}
    The assertions in Lemma \ref{lem:stationary} directly or easily follow by M\"uller--Yoshizawa's classification theorem \cite[Theorem 1.1]{MullerYoshizawa2024}, combined with the explicit definitions of critical points in \cite[Definition 2.7]{MullerYoshizawa2024} and the energy comparison results in \cite[Section 4]{MullerYoshizawa2024}.
    In particular, in terms of \cite[Definition 2.7]{MullerYoshizawa2024}, our $\gamma_{arc}^{\pm,\ell}$ and $\gamma_{loop}^{\pm,\ell}$ fall into the categories of $(1,\ell,1)$-longer arcs and $(1,\ell,1)$-loops, respectively.
    In addition, the threshold distance $\ell_\dagger$ is given by $\ell_\dagger=\sqrt{\lambda_\dagger}$ with $\lambda_\dagger\simeq0.32241$ defined in \cite[Lemma 4.10]{MullerYoshizawa2024}.
    If $\ell>\ell_\dagger$, then other solutions in $\Gamma_\ell$ called shorter arcs (the dotted curves in Figure \ref{fig:pinned_elasticae}) may have less energy than the loops.
    See also \cite[Lemma 2.4 and Lemma 2.5]{KM24} for parallel properties in the fixed-length case with slightly more detailed arguments.
\end{remark}

    

\subsection{Energy-barrier argument for migration}

For $\ell\geq0$ and $r\in\R$ we define the (nonempty) subclass $A_\ell^r$ of $A_\ell$ by
\[
A_\ell^r := \{ \gamma\in A_\ell \mid TC[\gamma]=r \}.
\]

\begin{proposition}\label{prop:existence_min_TC}
    For any $\ell\geq0$ and $r\in\R$ there exists a minimizer of $E$ in $A_\ell^r$.
\end{proposition}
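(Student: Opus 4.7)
The plan is to apply the direct method of the calculus of variations. Let $m:=\inf_{A_\ell^r}E$ and pick a minimizing sequence $\gamma_n\in A_\ell^r$. Reparametrizing each $\gamma_n$ to constant speed (which preserves both $A_\ell^r$ and $E$), I may assume $|\partial_x\gamma_n|\equiv L_n:=L[\gamma_n]$. The first task is to establish uniform bounds that yield compactness in $H^2$; the second is to verify that the weak $H^2$-limit still belongs to $A_\ell^r$ and that $E$ is lower semicontinuous.

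For the bounds, $E[\gamma_n]\leq m+1$ gives uniform upper bounds on $L_n$ and $B[\gamma_n]$. I need a uniform \emph{lower} bound $L_n\geq\delta>0$ to preserve the immersion property in the limit. This splits into three cases. If $\ell>0$, then trivially $L_n\geq|p_1-p_0|=\ell$. If $\ell=0$ and $r\neq 0$, Cauchy--Schwarz yields $r^2=(TC[\gamma_n])^2=\big(\int_{\gamma_n}k\,ds\big)^2\leq L_n B[\gamma_n]$, so $L_n\geq r^2/B[\gamma_n]$ is bounded below. If $\ell=0=r$, the curve $\gamma_n$ closes up in a $C^1$ fashion with rotation index zero, and Fenchel's inequality gives $2\pi\leq\int_{\gamma_n}|k|\,ds\leq\sqrt{L_nB[\gamma_n]}$, again yielding a positive lower bound on $L_n$. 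In the constant-speed parametrization, $\int_0^1|\partial_x^2\gamma_n|^2\,dx=L_n^3 B[\gamma_n]$, so combined with fixed endpoints and the uniform bound on $L_n$, the sequence $\gamma_n$ is bounded in $H^2(I;\R^2)$.

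Passing to a subsequence, $\gamma_n\rightharpoonup\gamma$ weakly in $H^2$ and strongly in $C^1$, with $L_n\to L\geq\delta>0$. The limit $\gamma$ has endpoints $(0,0)$ and $(\ell,0)$ by $C^0$-convergence, and constant speed $L>0$, so $\gamma\in A_\ell$. For the total curvature constraint, $C^1$-convergence of $\gamma_n$ implies uniform convergence of the unit tangents; choosing continuous lifts, $\theta[\gamma_n]\to\theta[\gamma]$ uniformly, hence $TC[\gamma_n]\to TC[\gamma]$ and thus $TC[\gamma]=r$. For lower semicontinuity, weak $L^2$-convergence of $\partial_x^2\gamma_n$ combined with $L_n\to L$ gives $\liminf_{n\to\infty}B[\gamma_n]=\liminf_{n\to\infty}L_n^{-3}\|\partial_x^2\gamma_n\|_{L^2}^2\geq L^{-3}\|\partial_x^2\gamma\|_{L^2}^2=B[\gamma]$, while $L_n\to L[\gamma]$ is continuous. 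Therefore $E[\gamma]\leq\liminf E[\gamma_n]=m$, and $\gamma$ is a minimizer.

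The main obstacle I expect is the uniform lower bound on $L_n$ in the degenerate case $\ell=0$: without it, the minimizing sequence could collapse to a constant map, and the weak limit would cease to be an immersion, rendering $TC$ meaningless. The case $\ell=0=r$ is the most delicate since the trivial Cauchy--Schwarz bound $L_n\geq r^2/B[\gamma_n]$ becomes vacuous, and one must instead invoke the Fenchel-type inequality for closed $C^1$ curves.
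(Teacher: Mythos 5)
Your proof is correct and follows essentially the same route as the paper: a direct method in which the only delicate point is the uniform lower bound on the lengths of a minimizing sequence, obtained via Cauchy--Schwarz and a Fenchel-type inequality. The only (minor) divergence is in the degenerate case $\ell=0$: the paper applies Fenchel's theorem for all $r$ simultaneously to the closed curve obtained by concatenating $\gamma_j$ with its point-symmetry transformation (yielding $\int_{\gamma_j}|k|\,ds\geq\pi$), whereas you split into $r\neq0$ (where $|TC|=|r|>0$ already bounds $\int|k|\,ds$ from below) and $r=0$ (where the constraints force the curve to close up in $C^1$ so Fenchel applies directly); both arguments are valid.
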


\begin{proof}
    This follows from a standard direct method, with the only delicate point being the need to check the non-degeneracy of length
    \begin{equation}\label{eq:nondeg_length}
        \inf_jL[\gamma_j]>0
    \end{equation}
    along a minimizing sequence $\{\gamma_j\}_j\subset A_\ell^r$.
    Otherwise, the proof proceeds in line with \cite[Theorem 3.7]{miura2024smoothcompactnesselasticae}, since the set $A_\ell^r$ is closed in the $C^1$ topology.

    The non-degeneracy \eqref{eq:nondeg_length} is trivial if $\ell>0$ since $L[\gamma_j]\geq\ell$.
    Suppose $\ell=0$.
    The Cauchy--Schwarz inequality implies
    $ 
    L[\gamma_j]B[\gamma_j] \geq (\int_{\gamma_j}|k|ds)^2.
    $
    Then by applying Fenchel's theorem to the closed curve created by concatenating $\gamma_j$ and its point-symmetry transformation, we deduce that $\int_{\gamma_j}|k|ds\geq \pi$, so that $L[\gamma_j]B[\gamma_j]\geq \pi^2$.
    This implies \eqref{eq:nondeg_length} since $\sup_{j}B[\gamma_j]\leq \sup_{j}E[\gamma_j]<\infty$.
\end{proof}

The following is a key property for our energy-barrier argument.

\begin{proposition}\label{prop:min_energy_TC}
    The minimum function $m:[0,\infty)\times\R\to[0,\infty)$ defined by 
    \[
    m(\ell,r):=\min_{\gamma\in A_\ell^r} E[\gamma]
    \]
    is continuous except at $(\ell,r)=(0,0)$.
    In addition, for any $r\in\R$,
    \[
    m(0,r) \geq m(0,r_*)=  E[\gamma_*],
    \]
    where equality holds if and only if $|r|= r_*$.
\end{proposition}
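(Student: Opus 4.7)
My plan is to handle the two assertions separately: the structural inequality at $\ell=0$, and the continuity of $m$ off the origin.

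\textbf{The inequality at $\ell=0$ and its equality case.} Since $A_0^r \subseteq A_0$ for every $r\in\R$, Lemma \ref{lem:Li-Yau} immediately yields $m(0,r) \geq \min_{A_0} E = E[\gamma_*]$. Taking $r=r_*$, the curve $\gamma_*$ itself is admissible in $A_0^{r_*}$, so $m(0,r_*) = E[\gamma_*]$; reflecting $\gamma_*$ across $\Lambda_0$ gives the same equality at $r=-r_*$. For the converse, if $m(0,r) = E[\gamma_*]$, then the minimizer produced by Proposition \ref{prop:existence_min_TC} is also an unconstrained minimizer of $E$ on $A_0$, and the uniqueness clause of Lemma \ref{lem:Li-Yau} forces it to be congruent to $\gamma_*$. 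Since rigid motions preserve $|TC|$, we conclude $|r|=r_*$.

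\textbf{Upper semicontinuity.} At $(\ell_0,r_0)\neq(0,0)$, I would take a minimizer $\gamma_0 \in A_{\ell_0}^{r_0}$ and construct $H^2$-small perturbations $\tilde\gamma \in A_\ell^r$ for $(\ell,r)$ near $(\ell_0,r_0)$. When $\ell_0>0$ the endpoint correction is achieved by a linear rescaling in $x$, changing $E$ by $O(|\ell-\ell_0|)$; when $\ell_0=0$ and $r_0\neq 0$ I would instead attach a short smooth segment at the second endpoint, contributing $O(\ell)$ to the energy. The total-curvature correction is then effected by twisting the tangent angle through a bump profile with integral $r-r_0$, using a standard implicit function theorem argument on a multi-parameter family of perturbations to cancel the induced endpoint drift and restore $\gamma\in A_\ell^r$. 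Standard estimates yield $E[\tilde\gamma] = E[\gamma_0] + O(|\ell-\ell_0|+|r-r_0|)$, hence $\limsup m(\ell,r) \leq m(\ell_0,r_0)$.

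\textbf{Lower semicontinuity.} Pick constant-speed minimizers $\gamma_n \in A_{\ell_n}^{r_n}$ for a sequence $(\ell_n,r_n)\to(\ell_0,r_0)\neq(0,0)$. Upper semicontinuity bounds $E[\gamma_n]$, hence $B[\gamma_n]$ and $L[\gamma_n]$, uniformly from above. A uniform lower bound on $L[\gamma_n]$ follows from $L[\gamma_n]\geq\ell_n$ if $\ell_0>0$, and from the Cauchy--Schwarz/Fenchel estimate $L[\gamma_n]\,B[\gamma_n] \geq \bigl(\int|k|\,ds\bigr)^2 \geq TC[\gamma_n]^2 = r_n^2 \to r_0^2>0$ if $\ell_0=0,r_0\neq 0$; this is exactly the step that breaks at $(0,0)$. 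With $L[\gamma_n]$ bounded away from $0$ and $\infty$, the constant-speed curves $\gamma_n$ are uniformly $H^2$-bounded, so a subsequence converges weakly in $H^2$ and strongly in $C^1$ to some $\gamma_\infty$. Endpoints pass to the limit, and a consistent lift of the tangent angles $\theta_n$ converges uniformly, so $TC[\gamma_n]\to TC[\gamma_\infty]=r_0$ and $\gamma_\infty\in A_{\ell_0}^{r_0}$. Weak lower semicontinuity of $B$ together with continuity of $L$ then gives $\liminf E[\gamma_n] \geq E[\gamma_\infty] \geq m(\ell_0,r_0)$.

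\textbf{Main obstacle.} The principal difficulty I foresee is the upper semicontinuity construction at $\ell_0=0$, where no global rescaling is available and where the endpoint and total-curvature corrections must be produced by a single compatible perturbation whose linearized constraint Jacobian is nondegenerate at $\gamma_0$. The lower semicontinuity argument, in turn, rests on a length bound that degenerates precisely at the excluded point $(0,0)$, which explains why continuity must fail there.
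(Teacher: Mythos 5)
Your proposal is correct and takes essentially the same route as the paper's proof: Lemma \ref{lem:Li-Yau} handles the inequality at $\ell=0$ and its equality case, upper semicontinuity is obtained by perturbing a minimizer near the endpoints to meet the nearby constraints, and lower semicontinuity follows from the direct method once the length non-degeneracy is secured via $L[\gamma_n]\,B[\gamma_n]\geq TC[\gamma_n]^2=r_n^2\to r_0^2>0$ (the paper does not need Fenchel here, only $\int|k|\,ds\geq|TC|$). The only difference is presentational: the paper delegates the endpoint/total-curvature perturbation to a cited remark rather than sketching the implicit-function-theorem construction.
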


\begin{proof}
    By Proposition \ref{prop:existence_min_TC} the minimum function is well-defined: for each $(\ell,r)$ there is $\gamma^{\ell,r}\in A_\ell^r$ such that $E[\gamma^{\ell,r}]=m(\ell,r)$.
    Also by Lemma \ref{lem:Li-Yau} we deduce that $m(0,r)\geq m(0,r_*)$ with the desired equality condition.
    It remains to show the continuity of $m$.

    The upper continuity of $m$ holds at every $(\ell,r)\in[0,\infty)\times\R$.
    This follows by taking a minimizer $\gamma\in A_\ell^r$ of $E$ and, for any sequence $(\ell_j,r_j)\to(\ell,r)$, by perturbing $\gamma$ near the endpoints as in \cite[Remark 3.4]{miura2024smoothcompactnesselasticae} so that the perturbed curve $\gamma_j$ belongs to $A_{\ell_j}^{r_j}$ and converges to $\gamma$ in $H^2$.

    To prove the lower semicontinuity, we consider any sequence $(\ell_j,r_j)\to(\ell,r)\neq(0,0)$, and for each $j$ take any minimizer $\gamma_j\in A_{\ell_j}^{r_j}$ of $E$.
    By the upper semicontinuity of $m$, we have $\sup_{j}(B[\gamma_j]+L[\gamma_j])=\sup_{j}m(\ell_j,r_j)<\infty$.
    Hence, in view of the standard argument using $H^2$-weak compactness (cf.\ \cite[Lemma 3.8, or Lemma 3.2]{miura2024smoothcompactnesselasticae}), it again suffices to show the length non-degeneracy \eqref{eq:nondeg_length}.
    This is again trivial when $\ell>0$, since $L[\gamma_j]\geq\ell_j\to\ell>0$.
    If $\ell=0$, then $r\neq0$, so by the Cauchy--Schwarz inequality
    \[
    L[\gamma_j]B[\gamma_j]\geq\Big(\int_{\gamma_j}|k|ds\Big)^2\geq TC[\gamma_j]^2= r_j^2 \to r^2>0,
    \]
    from which the desired non-degeneracy follows.
    The proof is complete.
\end{proof}

We are now in a position to prove our main theorem.

\begin{proof}[Proof of Theorem \ref{thm:main_migrating}]
    We divide the argument into several steps.
    
    \emph{Step 1: Construction of an energy barrier.}
    Using the minimum function in Proposition \ref{prop:min_energy_TC}, we define
    \[
    m_*:=m(0,\tfrac{1}{2}r_*)-m(0,r_*)>0.
    \]
    By the continuity in Proposition \ref{prop:min_energy_TC},
    \[
    \lim_{(\ell,r)\to(0,\frac{1}{2}r_*)}m(\ell,r) = m(0,\tfrac{1}{2}r_*) = m(0,r_*) + m_* = E[\gamma_*]+m_*.
    \]
    Hence there is some $c_1>0$ such that
    \begin{equation}\label{eq:energy_barrier}
        m(\ell,\tfrac{1}{2}r_*)\geq E[\gamma_*]+\frac{1}{2}m_* \quad \text{for all $\ell\in[0,c_1)$}.
    \end{equation}

    \emph{Step 2: Construction of well-prepared initial data.}
    Let $\ell\in(0,\ell_\dagger)$.
    Following the same perturbation argument as in \cite[Lemma 2.13]{KM24} (based on the instability theory in \cite{MY_2024_Crelle}) we can perturb the unstable critical point $\gamma_{loop}^{+,\ell}\in A_\ell$ in Lemma \ref{lem:stationary} to construct a smooth curve
    \begin{equation}\label{eq:regularity}
        \gamma_0 \in A_\ell\cap C^\infty(\bar{I};\R^2)
    \end{equation}
    such that the following properties hold:
    \begin{align}
        &E[\gamma_0]<E[\gamma_{loop}^{+,\ell}], \label{eq:energy_decrease}\\
        &k[\gamma_0](0)=k[\gamma_0](1)=0, \label{eq:curvature_zero}
    \end{align}
    and $\gamma_0$ is arbitrarily close to $\gamma_{loop}^{+,\ell}$ in $H^2(I;\R^2)$.
    Due to this closeness we may particularly assume that
    \begin{align}
        TC[\gamma_0] \geq TC[\gamma_{loop}^{+,\ell}]-\frac{1}{4}r_*, \label{eq:TC_close}
    \end{align}
    and also, combined with Lemma \ref{lem:stationary} (i),
    \begin{align}
        \text{$\gamma_0\in A_\ell$ is strongly contained in $H_+$}. \label{eq:strongly_contained}
    \end{align}
    (In fact, the construction here may be much easier than \cite{KM24} since we do not need to prescribe the length.)
    In particular, thanks to \eqref{eq:regularity} and \eqref{eq:curvature_zero} we can choose $\gamma_0\in A_\ell$ as an initial datum in Lemma \ref{lem:elastic_flow}.

    In addition, we obtain more properties of the above $\gamma_0\in A_\ell$ for smaller $\ell$.
    By the convergence properties in \eqref{eq:loop_convergence}, there is $c_2\in(0,\ell_\dagger]$ such that for any $\ell\in(0,c_2)$,
    \[
    \Big| E[\gamma_{loop}^{+,\ell}]-E[\gamma_*] \Big|\leq \frac{1}{2}m_*, \quad \Big| TC[\gamma_{loop}^{+,\ell}]-TC[\gamma_*] \Big| \leq \frac{1}{4}r_*.
    \]
    Combining these with \eqref{eq:energy_decrease} and \eqref{eq:TC_close}, we deduce that
    \begin{align}
        &E[\gamma_0]< E[\gamma_*]+\frac{1}{2}m_*,\label{eq:initial_energy}\\
        &TC[\gamma_0] \geq TC[\gamma_*]-\frac{1}{2}r_* = \frac{1}{2}r_*. \label{eq:initial_TC}
    \end{align}

    \emph{Step 3: Proof of migration.}
    Let $c:=\min\{c_1,c_2\}$, and let $\ell\in(0,c)$.
    We prove that the (smooth) elastic flow $\gamma$ starting from $\gamma_0\in A_\ell$ satisfies the desired properties.
    
    By \eqref{eq:strongly_contained} and the smooth convergence of $\gamma$ as $t\to+0$ (well-posedness in Lemma \ref{lem:elastic_flow}), there is $t_0>0$ such that $\gamma(I\times[0,t_0])\subset H_+^\circ$.

    By \eqref{eq:energy_decrease} and by the decreasing property of $E$, the asymptotic limit $\gamma_\infty\in A_\ell$ in Lemma \ref{lem:elastic_flow} must satisfy $E[\gamma_\infty]<E[\gamma_{loop}^{+,\ell}]$.
    Then Lemma \ref{lem:stationary} (iii) ensures that, up to reparametrization,
    \begin{equation}\label{eq:limit_candidate}
        \gamma_\infty \in \{ \gamma_{seg}^\ell,\gamma_{arc}^{+,\ell},\gamma_{arc}^{-,\ell}\}.
    \end{equation}
    In addition, we also need to have 
    \begin{equation}\label{eq:limit_TC_positive}
        TC[\gamma_\infty]>0.
    \end{equation}
    Indeed, if otherwise $TC[\gamma_\infty]\leq0$, then by \eqref{eq:initial_TC} and the mean value theorem there is some $t_*\geq 0$ such that $TC[\gamma(\cdot,t_*)]=\frac{1}{2}r_*$, but in this case, estimate \eqref{eq:energy_barrier} implies that $E[\gamma(\cdot,t_*)]\geq E[\gamma_*]+\frac{1}{2}r_*$, which contradicts \eqref{eq:initial_energy} and the decreasing property of $E$.

    Combining \eqref{eq:limit_candidate} and \eqref{eq:limit_TC_positive} with Lemma \ref{lem:stationary} (ii), we deduce that $\gamma_\infty=\gamma_{arc}^{-,\ell}$.
    By Lemma \ref{lem:stationary} (i) and the smooth convergence $\gamma(\cdot,t)\to \gamma_\infty$ as $t\to\infty$ (up to reparametrization), we further deduce that there is a $t_1\in(t_0,\infty)$ such that $\gamma(I\times[t_1,\infty))\subset H_-^\circ$.
    The proof is now complete.
\end{proof}

\begin{remark}
    Our approach to the proof of Theorem \ref{thm:main_migrating} can even provide an explicit upper bound for allowable $\ell>0$.
    The key idea in our proof is to use the fact that there is some $r\in(0,r_*)$ such that, starting from the perturbed upper loop, the elastic flow cannot go beyond the energy-barrier $m(\ell,r)$.
    Consequently, the same reasoning applies to all $\ell\in(0,\ell_\dagger)$ for which
    \[
    \max_{r\in[0,r_*]}m(\ell,r)\geq E[\gamma_{loop}^{+,\ell}].
    \]
    However, this bound is likely not optimal for migration, so we do not attempt to compute it here.
\end{remark}

\begin{remark}
    The flow constructed here does not converge to the segment, thus different from the numerical result in \cite[Figure 5]{KM24} (but rather similar to the length-preserving case \cite[Figure 2]{KM24}).
    It is theoretically open whether there exists a migrating elastic flow that converges to the segment.
\end{remark}

\bibliography{ref_KM}

\end{document}